\begin{document}
\baselineskip = 16pt

\newcommand \ZZ {{\mathbb Z}}
\newcommand \NN {{\mathbb N}}
\newcommand \RR {{\mathbb R}}
\newcommand \PR {{\mathbb P}}
\newcommand \AF {{\mathbb A}}
\newcommand \GG {{\mathbb G}}
\newcommand \QQ {{\mathbb Q}}
\newcommand \CC {{\mathbb C}}
\newcommand \bcA {{\mathscr A}}
\newcommand \bcC {{\mathscr C}}
\newcommand \bcD {{\mathscr D}}
\newcommand \bcF {{\mathscr F}}
\newcommand \bcG {{\mathscr G}}
\newcommand \bcH {{\mathscr H}}
\newcommand \bcM {{\mathscr M}}
\newcommand \bcJ {{\mathscr J}}
\newcommand \bcL {{\mathscr L}}
\newcommand \bcO {{\mathscr O}}
\newcommand \bcP {{\mathscr P}}
\newcommand \bcQ {{\mathscr Q}}
\newcommand \bcR {{\mathscr R}}
\newcommand \bcS {{\mathscr S}}
\newcommand \bcV {{\mathscr V}}
\newcommand \bcU {{\mathscr U}}
\newcommand \bcW {{\mathscr W}}
\newcommand \bcX {{\mathscr X}}
\newcommand \bcY {{\mathscr Y}}
\newcommand \bcZ {{\mathscr Z}}
\newcommand \goa {{\mathfrak a}}
\newcommand \gob {{\mathfrak b}}
\newcommand \goc {{\mathfrak c}}
\newcommand \gom {{\mathfrak m}}
\newcommand \gon {{\mathfrak n}}
\newcommand \gop {{\mathfrak p}}
\newcommand \goq {{\mathfrak q}}
\newcommand \goQ {{\mathfrak Q}}
\newcommand \goP {{\mathfrak P}}
\newcommand \goM {{\mathfrak M}}
\newcommand \goN {{\mathfrak N}}
\newcommand \uno {{\mathbbm 1}}
\newcommand \Le {{\mathbbm L}}
\newcommand \Spec {{\rm {Spec}}}
\newcommand \Gr {{\rm {Gr}}}
\newcommand \Pic {{\rm {Pic}}}
\newcommand \Jac {{{J}}}
\newcommand \Alb {{\rm {Alb}}}
\newcommand \Corr {{Corr}}
\newcommand \Chow {{\mathscr C}}
\newcommand \Sym {{\rm {Sym}}}
\newcommand \Prym {{\rm {Prym}}}
\newcommand \cha {{\rm {char}}}
\newcommand \eff {{\rm {eff}}}
\newcommand \tr {{\rm {tr}}}
\newcommand \Tr {{\rm {Tr}}}
\newcommand \pr {{\rm {pr}}}
\newcommand \ev {{\it {ev}}}
\newcommand \cl {{\rm {cl}}}
\newcommand \interior {{\rm {Int}}}
\newcommand \sep {{\rm {sep}}}
\newcommand \td {{\rm {tdeg}}}
\newcommand \alg {{\rm {alg}}}
\newcommand \im {{\rm im}}
\newcommand \gr {{\rm {gr}}}
\newcommand \op {{\rm op}}
\newcommand \Hom {{\rm Hom}}
\newcommand \Hilb {{\rm Hilb}}
\newcommand \Sch {{\mathscr S\! }{\it ch}}
\newcommand \cHilb {{\mathscr H\! }{\it ilb}}
\newcommand \cHom {{\mathscr H\! }{\it om}}
\newcommand \colim {{{\rm colim}\, }} % colimit
\newcommand \End {{\rm {End}}}
\newcommand \coker {{\rm {coker}}}
\newcommand \id {{\rm {id}}}
\newcommand \van {{\rm {van}}}
\newcommand \spc {{\rm {sp}}}
\newcommand \Ob {{\rm Ob}}
\newcommand \Aut {{\rm Aut}}
\newcommand \cor {{\rm {cor}}}
\newcommand \Cor {{\it {Corr}}}
\newcommand \res {{\rm {res}}}
\newcommand \red {{\rm{red}}}
\newcommand \Gal {{\rm {Gal}}}
\newcommand \PGL {{\rm {PGL}}}
\newcommand \Bl {{\rm {Bl}}}
\newcommand \Sing {{\rm {Sing}}}
\newcommand \spn {{\rm {span}}}
\newcommand \Nm {{\rm {Nm}}}
\newcommand \inv {{\rm {inv}}}
\newcommand \codim {{\rm {codim}}}
\newcommand \Div{{\rm{Div}}}
\newcommand \CH{{\rm{CH}}}
\newcommand \sg {{\Sigma }}
\newcommand \DM {{\sf DM}}
\newcommand \Gm {{{\mathbb G}_{\rm m}}}
\newcommand \tame {\rm {tame }}
\newcommand \znak {{\natural }}
\newcommand \lra {\longrightarrow}
\newcommand \hra {\hookrightarrow}
\newcommand \rra {\rightrightarrows}
\newcommand \ord {{\rm {ord}}}
\newcommand \Rat {{\mathscr Rat}}
\newcommand \rd {{\rm {red}}}
\newcommand \bSpec {{\bf {Spec}}}
\newcommand \Proj {{\rm {Proj}}}
\newcommand \pdiv {{\rm {div}}}
\newcommand \wt {\widetilde }
\newcommand \ac {\acute }
\newcommand \ch {\check }
\newcommand \ol {\overline }
\newcommand \Th {\Theta}
\newcommand \cAb {{\mathscr A\! }{\it b}}

\newenvironment{pf}{\par\noindent{\em Proof}.}{\hfill\framebox(6,6)
\par\medskip}

\newtheorem{theorem}[subsection]{Theorem}
\newtheorem{conjecture}[subsection]{Conjecture}
\newtheorem{proposition}[subsection]{Proposition}
\newtheorem{lemma}[subsection]{Lemma}
\newtheorem{remark}[subsection]{Remark}
\newtheorem{remarks}[subsection]{Remarks}
\newtheorem{definition}[subsection]{Definition}
\newtheorem{corollary}[subsection]{Corollary}
\newtheorem{example}[subsection]{Example}
\newtheorem{examples}[subsection]{examples}

\title{Mumford-Roitman argument on families}
\author{Kalyan Banerjee}

\address{Indian Institute of Science Education and Research, Mohali, India}

\email{kalyanb@iisermohali.ac.in}

\begin{abstract}
The aim of this note is to show that the properties like motivic decomposability, Chow theoretic decomposition of the diagonal etc. happens for the special member of a family if it happens for a general member of the family.
\end{abstract}

\maketitle

\section{Introduction}
In the paper \cite{Voi}, C.Voisin proves that if one has a family of smooth projective varieties degenerating into a projective variety having only ordinary double point singularities, and we know that the general member of this family has decomposable diagonal then one has the decomposition of diagonal for the special singular fiber (rather for the desingularization of it). This is the starting point of this paper. We study relations in the Chow groups, that is we have a family of projective varieties over a projective base. Suppose that we have a property P (like Chow theoretic decomposition of the diagonal, Motivic decomposability etc.), for the general member of the family, then can it be deduced for the special fiber of the same family. Conversely suppose that the special fiber of the family does not satisfy the property P, then can we say that the general fiber does not satisfy the property P.

To study this question we need to study the relation between Chow schemes parametrizing algebraic cycles of a fixed dimension and fixed degree, and the natural map from the Chow scheme to the Chow group of codimension $p$ cycles. In \cite{R}, it has been proved that the natural map from the Symmetric power of a smooth projective variety to its Chow group zero cycles, has the fibers equal to a countable union of Zariski closed subsets of the symmetric power. This is what was predicted by Mumford in \cite{M}. Due to the theory of Suslin-Voevodsky \cite{SV}, we know that there exists a natural map from the $k$-points of the Chow scheme of codimension $p$-cycles to the Chow group of codimension $p$ cycles on $X$ (here $k$ is the ground field over which $X$ is defined). In \cite{BG}, it has been proved that the fibers of this natural map are equal to a countable union of Zariski closed subsets in the Chow scheme.

The technique is to understand, what does it mean to say that two codimension $p$ cycles are rationally equivalent in terms of the Chow scheme. In terms of Chow schemes it means that some shift of the two cycles which are rationally equivalent are rationally connected on the Chow scheme. This equivalent definition of rational equivalence is the main point in proving the above result, that the fiber of the map from the Chow scheme to the Chow group is a countable union of Zariski closed subsets of the Chow scheme.

So it would be nice to understand what happens when we consider a family of projective varieties, and consider the pairs of points on the base and cycles belonging to the Chow schemes of the fiber over that point, which are rationally equivalent to zero. Then we prove that this subset in the corresponding product is a countable union of Zariski closed subsets of the product. The technique used to prove such a theorem is to follow the Roitman's technique present in \cite{R}, proving the fibers of the natural map from the symmetric powers of a smooth projective variety to the Chow group, is a countable union of Zariski closed subsets of the symmetric power.

\medskip

\textit{
Let us consider a family of smooth, projective varieties $X\to B$. Let us consider the tuples $(b,z_b)$ such that $z_b$ is supported on $C^p_{d,d}(X_b)$ and is rationally equivalent to zero on $X_b$, where $C^p_{d,d}(X)$ denote the Chow scheme of $X$ parametrizing degree $d$ codimension $p$ cycles. Then we prove that this set inside $B\times C^p_{d,d}(X)$ is a countable union of Zariski closed subset of  $B\times C^p_{d,d}(X)$ where $C^p_{d,d}(X)$ is the family of Chow schemes over $B$.}

\bigskip

This has following consequences. We can consider "special relations" on the Chow group of the fibers of the given family, for example, Chow theoretic decomposition of the diagonal, motivic decomposability etc. They do fit into the above framework, that is we consider all points in the base such that the fiber over that satisfies the Chow theoretic decomposition of the diagonal. Then it follows that the collection of all such points in the base is a countable union of Zariski closed subsets in the base.

Another consequence of the above result is related to the non-rationality of a cubic fourfold. We start with a rational cubic fourfold and it is expected that the Fano variety of lines on the cubic is birational to the Hilbert scheme of two points on a fixed K3 surface. From this it has been deduced in \cite{B}, that there exists a surface or a smooth projective curve inside the Fano variety of lines on the cubic such that the push-forward induced at the level of zero cycles has torsion kernel. So we want to understand the collection of all cubic such that there exists a curve or a surface in the Fano variety such that the kernel of the induced push-forward is torsion. We prove that this collection is a countable union of Zariski closed subsets in the parameter space of smooth cubic fourfolds in $\PR^5$.

\medskip

\textit{Let $X$ be a cubic fourfold in $\PR^5$ such that there exists a smooth projective surface $S$ in $F(X)$, the Fano variety of lines, the kernel of the Gysin push-forward induced by the closed embedding of $S$ into $F(X)$ is torsion. We prove that the collection of such $X$ in the parameter space of cubic fourfolds is a countable union of Zariski closed subsets of the parameter space.}

\medskip

So if the expected result is true that the rationality of the cubic implies that the Fano variety is birational to the Hilbert scheme of two points on a fixed K3 surface, then the above result tells us that to prove the non-rationality of a very general cubic fourfold, we need find one cubic such  that there does not exist any curve or surface inside the Fano variety of lines of the corresponding cubic, such that the push-forward induced at the level of zero cycle has torsion kernel.

Another application of the above result is that the Bloch's conjecture for the surfaces of general type with $p_g=0$. Suppose that we have a family of surfaces of general type, degenerating into a special surface. Suppose the generic member of the family satisfies Bloch's conjecture, then its diagonal is balanced in the sense of \cite{BS}. Then our theorem tells us that the diagonal of the special fiber is balanced. It means that the Bloch's conjecture holds true for the desingularization of the special fiber. So this tells us that in particular a family of surfaces of general type with $p_g=0$ cannot be degenerated into a K3 surface, or a surface with geometric genus greater than zero. So it means that the special fiber must be of geometric genus zero and for them the Bloch conjecture must hold by our theorem. It is known \cite{Gul},\cite{Gul2}, that Bloch's conjecture is true for numerical Godeaux surfaces with an involution, some numerical Campedelli surfaces and the surface of Craighero and Gattazzo. Such degenerations of numerical Godeaux surfaces have been studied in \cite{FPR}, and the de-singularizations of the degenerations will satisfy Bloch's conjecture.

{\small \textbf{Acknowledgements:} The author would like to thank the hospitality of IISER-Mohali, for hosting this project. The author is indebted to Vladimir Guletskii for many useful conversations relevant to the theme of the paper. The author is thankful to S.Rollenske for pointing out the result about degenerations of numerical Godeaux surfaces in \cite{FPR}.}

\section{The Mumford-Roitman argument on families}

Let $k$ be an algebraically closed field of characteristic $0$. Let $C^p_{d}(X)$ denote the relative Chow scheme of $X$ over $B$, where $X\to B$ is a flat family of projective varieties. This scheme represents the functor associating the relative cycles on the fiber product $X\times_ B Y$, to each scheme $Y$ over $B$, in  the sense of Suslin-Voevodsky [please see \cite{SV}]. Using the functoriality we can prove that the Chow scheme $C^p_d(X)$ is the family of Chow schemes $C^p_d(X_b)$ for $b$ varying in $B$. Then our aim is to prove the following theorem.

Here $C^p_{d_1,\cdots,d_n}(X)$ denote the  product $\prod_i C^p_{d_i}(X)$. Same for symmetric powers of $X$.

\begin{theorem}
Let us consider a family of smooth, projective varieties $X\to B$. Let us consider the tuples $(b,z_b)$ such that $z_b$ is supported on $C^p_{d,d}(X_b)$ and is rationally equivalent to zero on $X_b$, where $C^p_{d,d}(X)$ denote the Chow scheme of $X$ parametrizing degree $d$ codimension $p$ cycles. Then we prove that this set inside $B\times C^p_{d,d}(X)$ is a countable union of Zariski closed subset of  $B\times C^p_{d,d}(X)$ where $C^p_{d,d}(X)$ is the family of Chow schemes over $B$.
\end{theorem}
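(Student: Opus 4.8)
The plan is to cut the set in question, call it $\Sigma$, into a countable union indexed by the numerical complexity of a witnessing rational equivalence, and to realise each piece as the image of a scheme proper over $B$, in the spirit of \cite{R} and \cite{BG}. Fix notation: for $e\ge 0$ write $Y_e:=C^p_{d+e}(X/B)$ for the relative Chow scheme of codimension-$p$, degree-$(d+e)$ cycles; by the Suslin--Voevodsky theory recalled above, $Y_e\to B$ is proper, indeed projective, with fibre $C^p_{d+e}(X_b)$ over $b$, it carries a universal relative cycle on $Y_e\times_B X$, and addition of effective cycles defines morphisms $\sigma\colon C^p_d(X/B)\times_B C^p_e(X/B)\to Y_e$ over $B$. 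Since the graph of $C^p_{d,d}(X)\to B$ is closed in $B\times C^p_{d,d}(X)$ (the base being separated), it suffices to show that, viewed inside $C^p_{d,d}(X)=C^p_d(X/B)\times_B C^p_d(X/B)$, the locus $\Sigma$ of pairs $(z^+,z^-)$ on a fibre $X_b$ with $z^+\sim z^-$ is a countable union of Zariski closed subsets.

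The engine is the reformulation of fibrewise rational equivalence that is the crux of \cite{R} and \cite{BG}: for effective codimension-$p$, degree-$d$ cycles $z^+,z^-$ on a smooth projective $k$-variety $W$, one has $z^+\sim z^-$ on $W$ if and only if there exist an integer $e\ge 0$, an effective codimension-$p$, degree-$e$ cycle $a$ on $W$, and a two-pointed genus-$0$ stable map $g$ to $C^p_{d+e}(W)$ whose marked points map to the points representing $z^++a$ and $z^-+a$. For the forward direction, writing $z^+-z^-=\sum_i\pdiv(f_i)$ with each $f_i$ a nonconstant rational function on a codimension-$(p-1)$ subvariety of $W$ and letting $V$ be the sum of the graphs of the $f_i$ pushed forward to $W\times\PR^1$ (all components then dominating $\PR^1$), one has $V(0)-V(\infty)=z^+-z^-$, so $t\mapsto V(t)+z^+$ defines a morphism $\PR^1\to C^p_{d+e}(W)$ sending $0,\infty$ to $z^++a,\ z^-+a$ with $a:=V(0)$; this is a stable map when $z^+\ne z^-$, and the locus $z^+=z^-$ is the closed diagonal, handled separately. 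Conversely, pulling the universal cycle back along any such $g$ gives a family of cycles on $W$ over a connected nodal genus-$0$ curve, whence $z^++a\sim z^-+a$ and therefore $z^+\sim z^-$.

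Now fix $(e,D)\in\NN^2$ and a relatively ample bundle $H$ on $Y_e/B$, and let $\overline M_{e,D}:=\overline M_{0,2}(Y_e/B,\le D)$ be the relative moduli space of two-pointed genus-$0$ stable maps to $Y_e/B$ of $H$-degree $\le D$; since $Y_e\to B$ is projective, $\overline M_{e,D}\to B$ is proper. Let $\mathcal I_{e,D}$ be the subscheme of $C^p_d(X/B)\times_B C^p_d(X/B)\times_B C^p_e(X/B)\times_B\overline M_{e,D}$ cut out by the two conditions $\ev_1=\sigma(z^+,a)$ and $\ev_2=\sigma(z^-,a)$ in $Y_e$, where $\ev_1,\ev_2$ are the evaluation morphisms; these are closed conditions because $Y_e\to B$ is separated, so $\mathcal I_{e,D}$ is closed in a scheme proper over $B$, hence proper over $B$. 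A $B$-morphism from a scheme proper over $B$ to one separated over $B$ is proper, so the image $\Sigma_{e,D}$ of $\mathcal I_{e,D}$ in $C^p_{d,d}(X)$ is Zariski closed. By the reformulation, $\Sigma$ equals the closed diagonal together with $\bigcup_{(e,D)\in\NN^2}\Sigma_{e,D}$: each $\Sigma_{e,D}$ is contained in $\Sigma$ because a two-pointed genus-$0$ stable map to a fibre $C^p_{d+e}(X_b)$ yields a genuine rational equivalence of the cycles over its marked points, and conversely every nontrivial fibrewise rational equivalence is witnessed by such a stable map of some complexity $(e,D)$. As $\NN^2$ is countable, this proves the theorem.

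The main obstacle is the relative Chow-scheme bookkeeping that underlies both the reformulation and the properness statements: one must know that $C^p_\bullet(X/B)$ is genuinely proper (projective) over $B$ with the expected fibres, that it carries a universal relative cycle pulling back correctly along arbitrary base changes --- in particular along the nodal genus-$0$ curves appearing as domains of stable maps --- and that such a pulled-back family over a connected genus-$0$ curve forces its marked fibres to be rationally equivalent on the corresponding fibre of $X\to B$; moreover the addition of effective cycles and the restriction of a cycle on $W\times\PR^1$ to $0$ and $\infty$ must be verified to be morphisms of (possibly non-reduced) schemes, not merely set-theoretic operations on cycle groups. Granting the absolute results of \cite{R}, \cite{BG} and the Suslin--Voevodsky functoriality recalled above, the passage to families is then precisely the observation that every construction used --- relative Chow scheme, universal cycle, cycle addition, relative space of stable maps, evaluation morphisms --- makes sense over $B$, and that ``proper over a point'' becomes ``proper over $B$''.
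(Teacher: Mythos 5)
Your proof is correct in outline, but it takes a genuinely different route from the paper. The paper works with the quasi-projective Hom schemes $\Hom^v(\PR^1_k,C^p_d(X))$: it realises each piece $W_d^{u,v}$ as the (merely constructible) image of an evaluation map, and then must separately prove that the Zariski closure of each $W_d^{u,v}$ is still contained in $W_d=\bigcup_{u,v}W_d^{u,v}$ --- this is the Roitman-style limiting argument at the end, carried out by choosing a curve through a boundary point, lifting it to the Hom scheme, normalising, and extending the morphism to produce a witness $(b,f)$ over the limit point. You instead compactify the space of parametrised rational curves from the start, replacing $\Hom^v(\PR^1_k,C^p_{d+e}(X_b))$ by the relative Kontsevich space $\overline M_{0,2}(Y_e/B,\le D)$, so that each $\Sigma_{e,D}$ is the image of a scheme proper over $B$ and hence closed with no further argument; the closure step disappears. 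What your approach buys is precisely the elimination of the weakest point of the paper's proof (the paper's final paragraph asserts the existence of a preimage $(b,f)$ of the limit point $(b,A,B)$ without justifying why the limit of morphisms in a non-proper Hom scheme is again a morphism, and even contains the typo ``$f(0)=A, f(0)=B$''); what it costs is the need for two additional standard but nontrivial inputs, which you correctly flag: properness of $\overline M_{0,2}$ over a possibly badly singular projective target such as a Chow scheme, and the verification that a stable map with reducible nodal genus-$0$ domain still induces a rational equivalence between the cycles over its two marked points (by restricting to components and chaining through the nodes). Both are available in the literature, so your argument stands as a cleaner alternative; your decomposition is also indexed differently (by excess degree $e$ and curve degree $D$, with the diagonal split off separately) than the paper's $(u,v)$, but this is cosmetic.
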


\begin{proof}
So let us consider the relation that $z_b=z_b^+-z_b^-$ is rationally equivalent to zero. That means that there exists $f:\PR^1\to C^p_{d,d}(X_b)$, such that
$$f(0)=z_b^{+}+\gamma,f(\infty)=z_b^{-}+\gamma\;.$$
In other words we have the following map $\ev:Hom^v(\PR^1_k,C^p_{d}(X))$, given by $f\mapsto (f(0),f(\infty))$ and image of $f$ is contained in $C^p_{d,d}(X_b)$.  Consider the subscheme $U_{v,d}(X)$ of $B\times Hom^v(\PR^1_k,C^p_{d}(X))$ consisting of pairs $(b,f)$ such that image of $f$ is contained in $C^p_{d}(X_b)$ (such a universal family exists, for example see theorem 1.4 in \cite{Ko}). That gives us the morphism $(b,f)\mapsto (b,f(0),f(\infty))$, from $U_{v,d}(X)$ to $B\times C^p_{d,d}(X)$. Consider the closed subscheme $\bcV_{d,d}$ of $B\times C^p_{d,d}(X)$ given by $(b,z_1,z_2)$, such that $(z_1,z_2)$ belongs to $C^p_{d,d}(X_b)$. Then consider the map from $\bcV_{d,u,d,u}$ to $\bcV_{d+u,d+u}$ given by
$$(A,C,B,D)\mapsto (A+C,C,B+D,D)\;.$$
Then we can write the fiber product $\bcV$ of $U_{v,d}(X)$ and $\bcV_{d,u,d,u}$ over $\bcV_{d+u,d+u}$. If we consider the projection from $\bcV$ to $B\times C^p_{d,d}(X)$, then we get that $A$ and $B$ are supported and rationally equivalent on $X_b$. Conversely if $A,B$ are supported and rationally equivalent on $X_b$, then we have $f:\PR^1\to C^p_{d+u,u,d+u,u}(X_b)$ of some degree $v$ such that
$$f(0)=(A+C,C)\; f(\infty)=(B+D,D)\;,$$
where $C,D$ are supported on $X_b$. This analysis says that the image of the projection from $\bcV$ to $B\times C^p_{d,d}(X)$ , is a quasi-projective subscheme $W_{d}^{u,v}$ consisting of tuples $(b,A,B)$ such that $A,B$ are supported on $X_b$ and there exists $f:\PR^1_k\to C^p_{d+u,u}(X_b)$ such that $f(0)=(A+C,C)$ and $f(\infty)=(B+D,D)$, where $f$ is of degree $v$ and $C,D$ are supported on $X_b$ and they are co-dimension $p$ and degree $u$ cycles. So it means that $W_d=\cup_{u,v}W_d^{u,v}$. Now we prove that the Zariski closure of $W_d^{u,v}$ is in $W_d$ for each $u,v$.

For that we prove the following,

$$W_d^{u,v}=pr_{1,2}(\wt{s}^{-1}(W^{0,v}_{d+u}\times W^{0,v}_u))$$
where
$$\wt{s}(b,A,B,C,D)=(b,A+C,B+D,C,D)$$
from $B\times C^p_{d,d,u,u}(X)$ to $B\times C^p_{d+u,d+u,u,u}(X)\;.$
Let $(b,A,B,C,D)$ be such that its image under $\wt{s}$ is in $W^{0,v}_{d+u}\times W^{0,v}_u$. It means that there exists an element $(b,g)$ in $B\times\Hom^v(\PR^1_k,C^p_{d+u})$ and another $(b,h)$ in $\Hom^v(\PR^1_k,C^p_{u}(X))$ such that $g(0)=A+C,g(\infty)=B+D$ and $h(0)=C,h(\infty)=D$ and the image of $g,h$ are contained in $C^p_{d+u}(X_b)$, $C^p_u(X_b)$ respectively. Let us consider $f=g\times h$, then $f$ belong to $\Hom^v(\PR^1_k,C^p_{d+u,u}(X))$, such that the image of $f$ is contained in $C^p_{d+u,u}(X_b)$ with
$$f(0)=(A+C,C),(f(\infty))=(B+D,D)\;.$$
It means that $(b,A,B)$ belong to $W^d_{u,v}$.

On the other hand suppose that $(b,A,B)$ belongs to $W^d_{u,v}$. Then there exists $f$ in $\Hom^v(\PR^1_k,C^p_{d+u,u}(X_b))$ such that
$$f(0)=(A+C,C),f(\infty)=(B+D,D)\;,$$
and image of $f$ is contained in the Chow scheme of the fiber $X_b$.
Compose $f$ with the projections to $C^p_{d+u}(X_b)$ and to $C^p_{u}(X_b)$, then we have $g$ in $\Hom^v(\PR^1_k,C^p_{d+u}(X))$ and $h\in\Hom^v(\PR^1_k,C^p_{u}(X))$, such that
$$g(0)=A+C,g(\infty)=B+D$$
and
$$h(0)=C,h(\infty)=D\;,$$
and we have that the image of $g,h$ are contained in the respective Chow schemes of the fibers $X_b$.
Therefore we have that
$$W_d=pr_{1,2}(\wt{s}^{-1}(W_{d+u}\times W_u))\;.$$

Then we prove that the closure of $W_d^{0,v} $ is contained in $W_d$. Let $(b,A,B)$ be a closed point in the closure of ${W_d^{0,v}}$. Let $W$ be an irreducible component of ${W_d^{0,v}}$ whose closure contains $(b,A,B)$. Let $U$ be an affine neighborhood of $(b,A,B)$ such that $U\cap W$ is non-empty. Then there is an irreducible curve $C$ in $U$ passing through $(b,A,B)$. Let $\bar{C}$ be the Zariski closure of $C$ in $\bar{W}$. The map
$$e:U_{v,d}(X)\subset B\times \Hom^v(\PR^1_k,C^p_{d}(X))\to C^p_{d,d}(X)$$
given by
$$(b,f)\mapsto (b,f(0),f(\infty))$$
is regular and $W_d^{0,v}$ is its image. Let us choose a curve $T$ in $U_{v,d}(X)$ such that the closure of $e(T)$ is $\bar C$.  Consider the normalization $\wt{T}$ of the Zariski closure of $T$. Let $\wt{T_0}$ be the pre-image of $T$ in the normalization. Now the regular morphism $\wt{T_0}\to T\to \bar C$ extends to a regular morphism from $\wt{T}$ to $\bar C$. Now let $(b,f)$ be a pre-image of $(b,A,B)$. Then we have $f(0)=A;, f(0)=B$ and the image of $f$ is contained in $C^p_{d}(X_b)$ by definition of $U_{v,d}(X)$. Therefore $A,B$ are  rationally equivalent. This finishes the proof.
\end{proof}

\section{Application of the above result}

\subsection{Chow theoretic decomposition of the Diagonal}

In the paper by Voisin \cite{Voi}, an important stable birational invariant called Chow theoretic decomposition of the diagonal was introduced. Let $X$ be a smooth, projective variety, we say that the diagonal of $X$ admits Chow theoretic decomposition if the diagonal is rationally equivalent to $X\times x+Z$, where $Z$ supported on $D\times X$, where $D$ is a proper Zariski closed subset of $X$. Since all projective spaces admit Chow theoretic decomposition of diagonal, any variety which is stably rational, admits this property. So it is an invariant for stable rationality. In \cite{Voi}, it has been proved that if we have a flat family $X\to B$ such that the special fiber $X_0$ has atmost ordinary double point singularities, then the Chow theoretic decomposition of the general fiber implies that of the special fiber [therorem 1.1 in loc.cit.]. In particular if the special fiber $X_0$ does not admit of the Chow theoretic decomposition, then the general fiber does not admit of the Chow theoretic decomposition, hence is not stably rational.

This follows from previous theorem also that if the special fiber in a flat family does not admit of the Chow theoretic decomposition then the general fiber does not admit of the Chow theoretic decomposition. This is because the collection of points in the base, for which the corresponding fibers admit Chow theoretic decomposition is a countable union of Zariski closed subsets in the base. In particular the general fiber admits the decomposition means that it happens for all fibers (if the ground field is uncountable). Therefore we have that, if the special fiber does not admit a Chow theoretic decomposition then the general fiber does not admit such decomposition.

\subsection{Balancedness of the diagonal}

Similar argument holds true for Balancedness of the diagonal in family, meaning that if the diagonal is not balanced for the special fiber then it is not balanced for the general fiber.  The diagonal of a variety is balanced means that it is rationally equivalent to a sum of two cycles which are supported on $D\times X,X\times Z$, where $D,Z$ are proper Zariski closed subsets of $X$. For details please see \cite{BS},\cite{BL}. In particular if we have a family of surfaces of general type (rather a degeneration), then the Bloch's conjecture for the general surface implies the balanced ness of the diagonal for the general surface which would further imply it for the special surface appearing as a degeneration of a family of surfaces of general types.

\subsection{Motivic indecomposability}

In the paper by \cite{Gul}, the motivic indecomposability of the Chow motive of a smooth projective variety has been defined. The Chow motive of a smooth projective variety is said to be motivically decomposable if the diagonal is rationally equivalent to a sum of two codimension $d$ cycles on $X$ ($d$ is the dimension of $X$) each of which is non-torsion, non-balanced and not numerically trivial. So if we want to consider the motivic decomposability in a family then we have to consider the three conditions,

I) The diagonal is a sum of two correspondences

II) Each of the two correspondences are non-torsion

III) Each of the two correspondences are non-balanced

and one more that is each of them are not numerically trivial, but unfortunately we donot know how to encode numerical triviality in terms of Chow schemes.

So the collection points on the base for which the diagonal is a sum of two correspondences is a countable union of Zariski closed subsets of the base. The collection of points of the base for which the two correspondences are non-torsion and non-balanced is a countable intersection of Zariski open subsets in the base. So the points of the base for which motivic decomposability holds is a subset of a countable union of locally Zariski closed subsets of the base, hence it is a subset of a constructible set.

\subsection{Decomposition of the graph of a closed embedding of a hyperplane section}
Let us consider the Family of hyperplane sections of a smooth projective variety $X$ over a base $B$ (say for example that $B$ is a  projective line and the fibration is a Lefschetz pencil). Then we have the closed embedding of $X_t$ into $X$, denote it by $j_t$, we say that the graph of $j_t$ is decomposable, if as a cycle on $X_t\times X$, it is rationally equivalent to $X_t\times x+Z$, where $Z$ is supported on $X_t\times D$, $D$ proper, Zariski closed inside $X$. Then the theorem says that the decomposability of the graph of $j_t$ is a c-closed condition on the base. Meaning that the points in the base for which $j_t$ has decomposition, is a countable union of Zariski closed subsets in the base $B$. If the variety is a smooth projective surface, the by monodromy \cite{BG} it follows that for a general hyperplane section the graph does not have a decomposition, as the decomposition of the graph along with Chow moving lemma gives us that $j_{t*}=0$.

\section{Mumford-Roitman argument and non-torsion ness of the Gysin kernel}

In this section our aim is to prove the following.

\begin{theorem}
\label{theorem2}
Let $X$ be a cubic fourfold in $\PR^5$ such that there exists a smooth projective surface $S$ in $F(X)$, the Fano variety of lines, the kernel of the Gysin push-forward induced by the closed embedding of $S$ into $F(X)$ is torsion. We prove that the collection of such $X$ in the moduli of cubic fourfolds is a countable union of Zariski closed subsets of the parameter space.
\end{theorem}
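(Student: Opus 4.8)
The plan is to run the argument of the theorem of Section~2 (the Mumford--Roitman theorem on families) with two auxiliary families and one extra layer of parameters accounting for the surface $S$ itself.

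\emph{Step 1: the parameter spaces.} Let $\mathcal P\subset\PR(H^0(\PR^5,\mathcal O(3)))$ be the open locus of smooth cubic fourfolds, $\mathcal X\to\mathcal P$ the universal cubic, and $\mathcal F=F(\mathcal X/\mathcal P)\to\mathcal P$ the relative Fano variety of lines, a smooth projective family of hyperk\"ahler fourfolds. The subvarieties of the fibres $F(X)$ are organised, component by component, by the relative Hilbert scheme $\Hilb(\mathcal F/\mathcal P)=\bigsqcup_n\mathcal H_n$: countably many components, each projective over $\mathcal P$, with universal closed subscheme $\mathcal S_n\hra\mathcal F\times_{\mathcal P}\mathcal H_n$. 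Let $\mathcal H_n^\circ\subseteq\mathcal H_n$ be the open locus over which the fibre is a smooth surface; there $\mathcal S_n^\circ\to\mathcal H_n^\circ$ is a smooth projective family of surfaces and each $h$ gives a closed immersion $j_{n,h}\colon S_h\hra F_h$, hence $j_{n,h*}\colon \CH_0(S_h)\lra \CH_0(F_h)$. Over $B:=\mathcal H_n^\circ$ form the relative symmetric powers $\Sym^N_B(\mathcal S_n^\circ)$, $\Sym^N_B(\mathcal F\times_{\mathcal P}B)$ and the pushforward $j_{n*}\colon\Sym^N_B(\mathcal S_n^\circ)\lra\Sym^N_B(\mathcal F\times_{\mathcal P}B)$ (a closed immersion takes an effective $0$-cycle of degree $N$ to one of degree $N$).

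\emph{Step 2: encode rational equivalence through Section~2.} Fix $n,N$ and put $M_N=\Sym^N_B(\mathcal S_n^\circ)\times_B\Sym^N_B(\mathcal S_n^\circ)$. Applying the theorem of Section~2 to $\mathcal F\times_{\mathcal P}B\to B$ with $p=4$, $d=N$, the locus of $(h,w_1,w_2)$ with $w_1\sim w_2$ on $F_h$ is a countable union of Zariski closed subsets; pulling back along $(h,z_1,z_2)\mapsto(h,j_{n*}z_1,j_{n*}z_2)$ shows that
$$R_{n,N}:=\{(h,z_1,z_2)\in M_N:\ j_{n,h*}z_1\sim j_{n,h*}z_2\ \text{on}\ F_h\}$$
is a countable union of Zariski closed subsets of $M_N$; moreover, reading the proof of that theorem (where $W_d=\bigcup_{u,v}\ol{W_d^{u,v}}$ with each $\ol{W_d^{u,v}}$ still in $W_d$), it is even a countable union $\bigcup_\alpha\ol{W_\alpha}$ of closures of quasi-projective pieces contained in $R_{n,N}$. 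Applying the same theorem to $\mathcal S_n^\circ\to B$ with $p=2$, composed with the multiplication maps $\Sym^N_B(\mathcal S_n^\circ)\to\Sym^{mN}_B(\mathcal S_n^\circ)$, $z\mapsto mz$, gives that
$$Q_{n,N}:=\{(h,z_1,z_2)\in M_N:\ mz_1\sim mz_2\ \text{on}\ S_h\ \text{for some}\ m\ge 1\}$$
is likewise a countable union $\bigcup_\beta\ol{W'_\beta}$ of Zariski closed subsets of $M_N$ (a countable union over $\beta$ and over the multiplier $m$).

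\emph{Step 3: translate torsion-ness, push forward, conclude.} For $N\gg 0$ every degree-zero class on $S_h$ is a difference of two effective $0$-cycles of equal degree $N$, and by construction the fibres $R_{n,N}^{h}$, $Q_{n,N}^{h}$ are exactly the pairs whose difference lies in $\ker j_{n,h*}$, respectively is torsion in $\CH_0(S_h)$; hence $\ker j_{n,h*}$ is torsion $\iff R_{n,N}^{h}\subseteq Q_{n,N}^{h}$ for every $N$. To turn this into a countable union of closed conditions, stratify $B$ so as to refine $R_{n,N}=\bigcup_\gamma Z_{n,N,\gamma}$ with each $Z_{n,N,\gamma}\to B$ having irreducible fibres; since over $\CC$ an irreducible variety is not a countable union of proper Zariski closed subsets, $Z_{n,N,\gamma}^{h}\subseteq Q_{n,N}^{h}=\bigcup_\beta(\ol{W'_\beta})^{h}$ holds iff $Z_{n,N,\gamma}^{h}\subseteq(\ol{W'_\beta})^{h}$ for a single $\beta$, and the set of such $h$ is $\bigcup_\beta\{h:\ Z_{n,N,\gamma}^{h}\subseteq(\ol{W'_\beta})^{h}\}$ — a countable union of Zariski closed subsets of $B$, because "$Z^{h}$ contained in $(\ol{W'_\beta})^{h}$", for $Z$ and $\ol{W'_\beta}$ projective over $B$, is a closed condition on $h$. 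Intersecting over $\gamma$ and $N$ (a countable intersection of such sets is again of this shape; the monotonicity $R_{n,N}^{h}\setminus Q_{n,N}^{h}\hra R_{n,N+1}^{h}\setminus Q_{n,N+1}^{h}$, obtained by adding a common base point, makes this transparent but is not essential) keeps the torsion locus $T_n\subseteq\mathcal H_n^\circ$ a countable union of Zariski closed subsets. Finally $\{X\in\mathcal P:\exists\,S\hra F(X)\ \text{smooth surface with}\ \ker j_{S*}\ \text{torsion}\}=\bigcup_n p_n(T_n)$; applying the specialisation device in the proof of Section~2 once more — take a curve through a boundary point of $p_n(T)$ (for $T$ a closed piece of $T_n$), lift it, normalise, extend all the maps, and read off in the limit the surface together with the connecting $\PR^1$'s on $F$ and on $S$ and the auxiliary cycles — one gets $\ol{p_n(T)}\subseteq\mathcal T$, so $\mathcal T=\bigcup_{n,T}\ol{p_n(T)}$ is a countable union of Zariski closed subsets.

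The genuinely delicate point — more than bookkeeping over Section~2 — is Step~3: converting the \emph{universally quantified} assertion "every class in $\ker j_{S*}$ is torsion" into a countable union of \emph{closed} (not merely constructible) conditions. This rests on the strengthened form of the theorem of Section~2 (its loci are countable unions of closures of quasi-projective subsets still lying in the loci), on the Baire-category property of irreducible $\CC$-varieties, and on the refinement into pieces with irreducible fibres. A secondary issue is that "$S$ smooth" is an open, not closed, condition, so in the final specialisation one must keep the limiting subscheme a smooth surface; alternatively, replacing "smooth surface" by "closed subscheme of dimension $\le 2$" throughout makes $\mathcal H_n$ already proper over $\mathcal P$ and removes the closure argument at the end — which still suffices for the intended application to non-rationality of cubic fourfolds.
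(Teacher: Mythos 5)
Your proposal reaches the stated conclusion and, on the decisive point, is substantially more careful than the paper's own proof. The differences: you parametrise the surfaces by the relative Hilbert scheme where the paper uses Chow varieties of $2$-cycles (immaterial), and where the paper reruns the Section~2 argument verbatim for the relative symmetric powers of $\bcU\to\bcM$ to show that the loci ``$j_*(z)\sim 0$'' and ``$dz\sim 0$'' are countable unions of Zariski closed subsets, and then disposes of the quantifier alternation in two sentences --- concluding only that the complementary condition is ``constructible'' and asserting the theorem --- your Step~3 supplies exactly the argument that is missing there. Rewriting ``$\ker j_{S*}$ is torsion'' as the containments $R^h_{n,N}\subseteq Q^h_{n,N}$ for all $N$, refining $R_{n,N}$ into pieces with irreducible fibres of constant dimension, and using that an irreducible complex variety is not a countable union of proper closed subsets to replace ``$Z^h$ contained in $\bigcup_\beta(\ol{W'_\beta})^h$'' by ``$Z^h$ contained in a single $(\ol{W'_\beta})^h$'' (a closed condition on $h$ by semicontinuity of fibre dimension) is the standard Voisin-style device for such $\forall\exists$ conditions, and it is what genuinely yields closedness rather than mere constructibility. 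Your parenthetical claim that a countable intersection of countable unions of closed sets is again one is also correct, by Noetherianity: $\bigcap_N\bigcup_\beta C_{N,\beta}=\bigcup_{(\beta_N)}\bigcap_N C_{N,\beta_N}$, and each term stabilises at a finite sub-intersection, so only countably many distinct closed sets occur.

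Two soft spots remain, both of which you at least flag. First, your strata with irreducible fibres are only locally closed in $B=\mathcal H_n^\circ$, and $\mathcal H_n^\circ$ is only quasi-projective over the parameter space, so the pieces you produce are a priori locally closed; passing to their closures and showing the torsion-kernel condition survives specialisation (degenerating the connecting rational curves on $F$ and on $S$, the auxiliary cycles, and the multiplier $m$ simultaneously) is only sketched in your final step --- though at the same level of detail as the normalisation argument in Section~2 that it imitates. Second, smoothness of $S$ is an open condition and is not preserved in that limit; your proposed remedy of working with arbitrary two-dimensional closed subschemes (or resolving the limit) should be made explicit, since as literally stated the theorem concerns smooth surfaces.
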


\begin{proof}
Consider the family  of cubic fourfolds, denote the family by $\bcM$. Let us consider the the universal family of lines $\bcU$, given by pairs $(l,X)$, $l$ lying on $X$, in $\bcM\times G(1,\PR^5)$, where $G(1,\PR^5)$ is the Grassmannian of lines on $\PR^5$. Then $\bcU\to \bcM$ parametrizes the Fano variety of lines on the cubics in $\PR^5$. Then $\bcU$ is embedded into some $\PR^n$. Then consider the Chow vartiety $\bcC_{d}(\PR^n)$ of 2-cycles on $\PR^n$ and consider the subscheme of the product $\bcM\times \bcC_{d}(\PR^n) $ consisting of pairs $(S,m)$ such that $S\subset F_m$. Call this subscheme $\bcV$. Now the family $\bcU$ is a family over $\bcM$, so consider the relative symmetric power $\Sym^n (\bcU/\bcM)$ in the sense of Suslin-Voevodsky. So consider the tuples
$$(z,S,m)$$
such that $z$ is supported on $\Sym^n S$ and $S\subset F_m$. This is sub-variety inside $\Sym^n (\bcU/\bcM)\times \bcV$. Now consider the two conditions $j_*(z)$ is rationally equivalent to $0$ and $dz$ rationally equivalent to zero. By similar argument like  theorem 1 it follows that each of these conditions are c-closed. So the collection of $(z,S,m)$ such that $j_*(z)$ is rationally equivalent to zero  is a countable union of Zariski closed subsets in the ambient product variety. The collection of $(z,S,m)$, such that $z$ is torsion and supported on a symmetric power of $S$ is also a Zariski closed condition. So the complement of this countable union is a countable intersection of Zariski open sets in the product varietyt. So the collection of cubics for which for all surfaces in the Fano variety, there exists non-torsion elements in the Gysin kernel is a constructible condition. So the complement is a countable union of Zariski closed subsets in the moduli of cubics.

For the convenience let us give the proof parallel to theorem 1.

Let $z$ be a zero cycle supported on $S$, such that $j_*(z)=z^+-z^-$ is rationally equivalent to zero on $\CH_0(F_m)$.
That means that there exists $f:\PR^1\to \Sym^{d,d}(F_m)$, such that
$$f(0)=z^{+}+\gamma,f(\infty)=z^{-}+\gamma\;.$$
In other words we have the following map $\ev:Hom^v(\PR^1_k,\Sym^d(\bcU/\bcM))\to \Sym^{d,d}(\bcU/\bcM)$, given by $f\mapsto (f(0),f(\infty))$ and image of $f$ is contained in $\Sym^{d,d}(F_m)$.  Consider the subscheme $U_{v,d}(\bcU)$ of $ \bcV\times Hom^v(\PR^1_k,\Sym^d(\bcU/\bcM))$ consisting of pairs $(m,S,f)$ such that image of $f$ is contained in $\Sym^d(F_m)$, $S\subset F_m$ (such a universal family exists, for example see theorem 1.4 in \cite{Ko}). That gives us the morphism $(m,S,f)\mapsto (m,S,f(0),f(\infty))$, from $U_{v,d}(\bcU)$ to $ \bcV \times\Sym^{d,d}(\bcU/\bcM)$.

Consider the closed subscheme $\bcV_{d,d}$ of $ \Sym^{d,d}(\bcU/\bcM)\times \bcV$ given by $(z_1,z_2,S,m)$, such that $(z_1,z_2)$ belongs to $\Sym^{d,d}(S)$. Similarly we can construct $\bcV_{d,u,d,u}$. Then consider the map from $\bcV_{d,u,d,u}$ to $\bcV_{d+u,u,d+u,u}$ given by
$$(S,m,A,C,B,D)\mapsto (S,m,A+C,C,B+D,D)\;.$$

Then we can write the fiber product $\bcW$ of $U_{v,d}(\bcU)$ and $\bcV_{d,u,d,u}$ over $\bcV{d+u,u,d+u,u}(\bcU)$. If we consider the projection from $\bcW$ to $\bcV\times \Sym^{d,d}(\bcU/\bcM)$, then we get that $A$ and $B$ are supported on $S$ and $S\subset F_m$ and $A,B$ are rationally equivalent on $F_m$. Conversely if $A,B$ are supported and rationally equivalent on $F_m$ and supported on $S\subset F_m$, then we have $f:\PR^1\to \Sym^{d+u,u,d+u,u}(F_m)$ of some degree $v$ such that
$$f(0)=(A+C,C)\; f(\infty)=(B+D,D)\;,$$
where $C,D$ are supported on $F_m$ and we have that $(A,B)$ in the image of the projection of $\bcW\to \bcV\times \Sym^{d,d}(\bcU/\bcM)$. This analysis says that the image of the projection from $\bcW$ to $\bcV\times \Sym^{d,d}(\bcU/\bcM)$ , is a quasi-projective subscheme $W_{d}^{u,v}$ consisting of tuples $(m,A,B)$ such that $A,B$ are supported on $S\subset F_m$ and there exists $f:\PR^1_k\to \Sym^{d+u,u}(F_m)$ such that $f(0)=(A+C,C)$ and $f(\infty)=(B+D,D)$, where $f$ is of degree $v$ and $C,D$ are supported on $F_m$ and they are  degree $u$ cycles, relative zero cycles. So it means that $W_d=\cup_{u,v}W_d^{u,v}$. Now we prove that the Zariski closure of $W_d^{u,v}$ is in $W_d$ for each $u,v$.

For that we prove the following,

$$W_d^{u,v}=pr_{1,2}(\wt{s}^{-1}(W^{0,v}_{d+u}\times W^{0,v}_u))$$
where
$$\wt{s}(m,S,A,B,C,D)=(m,S,A+C,B+D,C,D)$$
from $\bcV\times \Sym^{d,d,u,u}(\bcU/\bcM)$ to $\bcV\times \Sym^{d+u,d+u,u,u}(\bcU/\bcM)\;.$
Let $(m,A,B,C,D)$ be such that its image under $\wt{s}$ is in $W^{0,v}_{d+u}\times W^{0,v}_u$. It means that there exists an element $(m,g)$ in $\bcV\times\Hom^v(\PR^1_k,\Sym^{d+u}(\bcU/\bcM))$ and another $(m,h)$ in $\Hom^v(\PR^1_k,\Sym^{u}(\bcU/\bcM))$ such that $g(0)=A+C,g(\infty)=B+D$ and $h(0)=C,h(\infty)=D$ and the image of $g,h$ are contained in $\Sym^{d+u}(F_m)$, $\Sym^u(F_m)$ respectively and $A+C,C,B+D,D$ are supported on $S\subset F_m$. Let us consider $f=g\times h$, then $f$ belong to $\Hom^v(\PR^1_k,\Sym^{d+u,u}(\bcU/\bcM))$, such that the image of $f$ is contained in $\Sym^{d+u,u}(F_m)$ with
$$f(0)=(A+C,C),(f(\infty))=(B+D,D)\;.$$
It means that $(m,S,A,B)$ belong to $W^d_{u,v}$.

On the other hand suppose that $(m,S,A,B)$ belongs to $W^d_{u,v}$. Then there exists $f$ in $\Hom^v(\PR^1_k,\Sym^{d+u,u}(F_m))$ such that
$$f(0)=(A+C,C),f(\infty)=(B+D,D)\;,$$
and image of $f$ is contained in the symmetric power of the fiber $F_m$.
Compose $f$ with the projections to $\Sym^{d+u}(F_m)$ and to $\Sym^u(F_m)$, then we have $g$ in $\Hom^v(\PR^1_k,\Sym^{d+u}(\bcU/\bcM))$ and $h\in\Hom^v(\PR^1_k,\Sym^{u}(\bcU/\bcM))$, such that
$$g(0)=A+C,g(\infty)=B+D$$
and
$$h(0)=C,h(\infty)=D\;,$$
and we have that the image of $g,h$ are contained in the respective symmetric powers  of the fibers $F_m$.
Therefore we have that
$$W_d=pr_{1,2}(\wt{s}^{-1}(W_{d+u}\times W_u))\;.$$

Then we prove that the closure of $W_d^{0,v} $ is contained in $W_d$. Let $(m,S,A,B)$ be a closed point in the closure of ${W_d^{0,v}}$. Let $W$ be an irreducible component of ${W_d^{0,v}}$ whose closure contains $(m,S,A,B)$. Let $U$ be an affine neighborhood of $(m,S,A,B)$ such that $U\cap W$ is non-empty. Then there is an irreducible curve $C$ in $U$ passing through $(m,S,A,B)$. Let $\bar{C}$ be the Zariski closure of $C$ in $\bar{W}$. The map
$$e:U_{v,d}(\bcU)\subset \bcV\times \Hom^v(\PR^1_k,\Sym^{d}(\bcU/\bcM))\to \Sym^{d,d}(\bcU/\bcM)$$
given by
$$(m,S,f)\mapsto (m,S,f(0),f(\infty))$$
is regular and $W_d^{0,v}$ is its image. Let us choose a curve $T$ in $U_{v,d}(\bcU)$ such that the closure of $e(T)$ is $\bar C$.  Consider the normalization $\wt{T}$ of the Zariski closure of $T$. Let $\wt{T_0}$ be the pre-image of $T$ in the normalization. Now the regular morphism $\wt{T_0}\to T\to \bar C$ extends to a regular morphism from $\wt{T}$ to $\bar C$. Now let $(m,S,f)$ be a pre-image of $(m,S,A,B)$. Then we have $f(0)=A;, f(\infty)=B$ and the image of $f$ is contained in $\Sym^{d}(F_m)$ by definition of $U_{v,d}(\bcU)$ and $S\subset F_m$ and $A,B$ are supported on $S$. Therefore $A,B$ are  rationally equivalent. This finishes the proof.

Similarly we can encode the condition of torsionness of a zero cycle supported on a smooth projective surface inside the Fano variety of lines on a cubic fourfold.

\end{proof}

\begin{remark}
This above theorem \ref{theorem2} implies that the collection of cubic fourfolds $X$ in $\PR^{55}$, for which there exists a smooth projective surface $S$ inside the Fano variety of lines $F(X)$, such the kernel of the push-forward induced by the embedding of $S$ into $F(X)$, is torsion, is a countable union of Zariski closed subsets in $\PR^{55}$. Now it is predicted that if the cubic is rational then it belongs to this countable union. So if the cubic does not belong to the countable union then it is irrational. So the only thing that we have to show for proving the irrationality of a very general cubic fourfold is that there exists a cubic fourfold for which, for all smooth projective surfaces in the Fano variety, the Gysin kernel contains a non-torsion point.
\end{remark}


\begin{thebibliography}{AAAAAAAA}
\bibitem[B]{B} K.Banerjee, {\em Algebraic cycles on the Fano variety of lines of a cubic fourfold}, {\small \tt  arXiv:1609.05627}, 2016.
\bibitem[BG]{BG} K. Banerjee and V. Guletskii, {\em Rational equivalence for line configurations on cubic hypersurfaces in $\mathbb P^5$.}, {\small \tt arXiv:1405.6430v1}, 2014.
\bibitem[BL]{BL} L. Barbieri-Viale, {Balanced Varieties}, Algebraic K-theory and its applications, World Sci. Publications, 298-312, 1999.
\bibitem[BS]{BS} S.Bloch, V.Srinivas, {Remarks on correspondences and algebraic cycles}, Americal Journal of Math., 105, 1983, 1235-1253.
\bibitem[FPR]{FPR} M.Franciosi, R. Pardini, S. Rollenske, {Gorenstein stable Godeaux surfaces}, arXiv:1611.07184, 2016.
\bibitem[Gul]{Gul} V.Guletskii, {\em Motivic obstruction to rationality }, arXiv:1605.09434
\bibitem[Gul1]{Gul1}V.Guletskii, {Bloch's conjecture for surfaces with involutions and of geometric genus zero}, arXiv:1704.04187
\bibitem[Gul2]{Gul2} V.Guletskii, {Bloch's conjecture for the surface of Craighero and Gattazzo }, arXiv:1609.04074
\bibitem[Ko]{Ko} J.Kollar, {Rational curves on algebraic varieties}, Springer, Berlin-Heidelberg-Newyork
\bibitem[M]{M} D.Mumford, {\em Rational equivalence for $0$-cycles on surfaces.}, J.Math Kyoto Univ. 9, 1968, 195-204.
\bibitem[R]{R} A.Roitman, {\em $\Gamma$-equivalence of zero dimensional cycles (Russian)}, Math. Sbornik. 86(128), 1971, 557-570.
\bibitem[SV]{SV} A.Suslin, V.Voevodsky, {\em Relative cycles and Chow sheaves}, Cycles, transfers, motivic homology theories, 10-86, Annals of Math studies.
\bibitem[Voi]{Voi} C.Voisin, {\em Unirational threefolds with no universal codimension 2 cycle}, Invent. Math., 2015, 207-237.
\bibitem[Vo]{Vo} C.Voisin, {\em Complex algebraic geometry and Hodge theory II}, Cambridge studies of Mathematics, 2002.

\end{thebibliography}
\end{document}